\newtheorem{thm}{{\bf  Theorem}}
\newtheorem{cor}{{\bf  Corollary}}
\newtheorem{pro}{{\bf  Proposition}}
\newtheorem{lem}{{\bf  Lemma}}
\begin{document}

\title{\textbf{The Metric Dimension of The Tensor Product of Cliques}}

\author{{\normalsize
{\sc H. Amraei ${}^{\mathsf{a}}$},\,
  {\sc H.R. Maimani ${}^{\mathsf{a}, \mathsf{b}}$},\,
  {\sc A. Seify ${}^{\mathsf{a}, \mathsf{b}}$}\,
  {\sc and A. Zaeembashi ${}^{\mathsf{a}}$\,}
 \vspace{3mm}
\\{\footnotesize{${}^{\mathsf{a}}$\it Mathematics Section, Department of Basic Sciences, Shahid Rajaee Teacher Training University,}}{\footnotesize{}}\\{\footnotesize{${}^{\mathsf{}}$\it
P.O. Box 16783-163,  Tehran, Iran.}}
{\footnotesize{}}\\{\footnotesize{${}^{\mathsf{b}}$\it School of Mathematics, Institute for Research in Fundamental Sciences (IPM),}}{\footnotesize{}}\\{\footnotesize{${}^{\mathsf{}}$\it
P.O. Box 19395-5746,
 Tehran, Iran.}}
\thanks{{\it E-mail addresses}:
$\mathsf{amrai.hadi@yahoo.com}$, $\mathsf{maimani@ipm.ir}$, $\mathsf{abbas.seify@gmail.com}$ and $\mathsf{azaeembashi@srttu.edu}$.} } }

\date{}

\maketitle

\begin{abstract}
Let $G$ be a connected graph and $W = \{w_1, w_2, \ldots, w_k\} \subseteq V(G)$ be an ordered set. For every vertex $v$,
the metric representation of $v$ with respect to $W$ is an ordered $k$-vector defined as $r(v|W) := (d(v, w_1), d(v, w_2), \ldots, d(v, w_k))$, where $d(x, y)$ is the distance between the vertices $x$ and $y$. The set $W$ is called a resolving set for $G$ if distinct
vertices of $G$ have distinct representations with respect to $W$.
The minimum cardinality of a resolving set for $G$ is its metric dimension and is denoted by $dim(G)$. 
In this paper, we study the metric dimension of tensor product of cliques and prove some bounds. Then we determine the metric dimension of tensor product of two cliques. 
\end{abstract}

\textbf{2010 Mathematics Subject Classification:} {Primary: 05C12; Secondary: 05C69.}

\textbf{Keywords}: {metric dimension, tensor product, clique.}

\section{Introduction}
Let $G=(V,E)$ be a finite simple graph. The {\it distance} between two vertices $u$ and $v$, denoted by $d(u, v)$, is the length of a shortest path between $u$ and $v$ in $G$. The diameter of $G$ is the maximum distance between the vertices of $G$ and is denoted by $diam(G)$. Suppose that $G$ and $H$ are two simple graphs. The {\it tensor product} of $G$ and $H$ is denoted by $G \otimes H$ and is a graph with $V(G \otimes H)=\{(u,v): u\in V(G), v\in V(H)\}$ and two vertices $(u,v)$ and $(x,y)$ are adjacent if and only if $ux \in E(G)$ and $vy \in E(H)$. For $t\geq 3$, the tensor product of $G_1, \ldots, G_t$ is defined by induction. 
\\
A {\it complete graph} of order $n$ is denoted by $K_n$ and is called a {\it clique}. Also, $K_{m,n}$ denotes the {\it complete bipartite graph}, whose two parts are of size $m$ and $n$. A subset $M \subseteq E(G)$ is called a {\it matching}, if no two edges in $M$ have a common end vertex. A matching $M$ is called a {\it perfect matching} if every vertex of $G$ is incident with some edge in $M$. Throughout the paper, we suppose that $V(K_m)=\{u_1, \ldots, u_m\}$ and $V(K_n)=\{v_1, \ldots, v_n\}$.
\\
Let $A_1, \ldots, A_k$ be nonempty sets and $T\subseteq A_1 \times \cdots \times A_k$, in which the product is {\it Cartesian product}. By $T(i) \subseteq A_i$ we mean all elements of $A_i$ which are appeared as the $i$-th coordinate of some element of $T$.
\\
For an ordered set $W = \{w_1, w_2, \ldots, w_k\}$ of vertices and a vertex $v$ in a connected
graph $G$, the ordered $k$-vector $r(v \, | \, W) := (d(v, w_1), d(v, w_2), \ldots, d(v, w_k))$ is called
the {\it metric representation} of $v$ with respect to $W$. The set $W$ is called a {\it resolving set} of $G$, if distinct
vertices of $G$ have distinct representations with respect to $W$. The minimum cardinality of a resolving set for $G$ is its {\it metric dimension} and is denoted by $dim(G)$. The metric dimension in general graphs was firstly studied by Harary and
Melter \cite{harary}, and independently by Slater \cite{slater}. In graph theory, metric dimension is a parameter that has appeared in various applications, as diverse as network discovery and verification \cite{app1}, strategies for the Mastermind game \cite{app2}, combinatorial optimization \cite{app3} and so on. 
\\
Finding the parameters of products of graphs is one of the well-known problems in graph theory. The metric dimension of the Cartesian product of graphs is studied in \cite{caceres}. Also, Jannesari and Omoomi studied the metric dimension of the Lexicographic product of graphs, see \cite{omoomi1}. In this article, we study the metric dimension of the tensor product of cliques. Also, we determine the metric dimension of the tensor product of two cliques. Our main result is as follows.

\begin{thm}\label{main}
Let $G=K_m \otimes K_n$ and $n \geq m$. Then
\begin{enumerate}
\item If $m=n=2$, then $G$ is disconnected.
\item If $n \leq 2m-2$ and $m \geq 3$. Then $dim(G) = \lceil \frac{2}{3}(m+n-2) \rceil$.
\item If $n\geq 2m-1$ and $m\geq 2$. Then $dim(G)= n-1$. 
\end{enumerate}
\end{thm}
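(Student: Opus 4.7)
My plan is to first work out the distances in $G=K_m\otimes K_n$. Two vertices $(u_i,v_j),(u_k,v_l)$ are adjacent iff $i\ne k$ and $j\ne l$; this immediately shows $K_2\otimes K_2$ is a disjoint union of two edges, proving part~(1). For $m,n\ge 3$, a short argument gives diameter~$2$ with $d((u_i,v_j),(u_k,v_l))=2$ exactly when $(i,j)\ne(k,l)$ and either $i=k$ or $j=l$. This lets me reformulate the resolving condition combinatorially: given $W=\{(u_{a_t},v_{b_t}):t\in[k]\}$, put $R_i:=\{t:a_t=i\}$ and $C_j:=\{t:b_t=j\}$. Then for $(u_i,v_j)\notin W$, the $t$-th coordinate of $r((u_i,v_j)\mid W)$ equals $2$ iff $t\in S(i,j):=R_i\cup C_j$ and equals $1$ otherwise, so $W$ is resolving iff the map $(i,j)\mapsto S(i,j)$ is injective on $V(G)\setminus W$. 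The degenerate subcase $m=2,\,n\ge 3$ (the crown graph, diameter $3$) admits a parallel but slightly modified analysis.

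For part~(3), I prove $\dim(G)\ge n-1$ by the pigeonhole observation that two distinct columns $j,j'$ with $C_j=C_{j'}=\emptyset$ would force $S(i,j)=R_i=S(i,j')$ for every $i$, a forbidden collision. For the matching upper bound I exhibit a resolving set of size $n-1$: fix one row $i^\ast$ to be token-free, choose $n-1$ columns each to receive exactly one token, and assign the token-rows so that every non-empty row gets at least two tokens; the hypothesis $n-1\ge 2(m-1)$ makes this allocation feasible. A routine case analysis on pairs of non-tokens then shows injectivity of $S$: same-row pairs are separated by distinct column tokens, same-column pairs by the disjoint (hence distinct) non-empty $R_i$'s, and mixed pairs because the $2\times 2$ ``swap'' that would otherwise collide them is ruled out by the ``two tokens per non-empty row'' condition.

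Part~(2) is the technical heart. For the upper bound I construct $W$ of size $\lceil 2(m+n-2)/3\rceil$ by distributing tokens as evenly as possible across rows and columns, with most rows and columns receiving exactly two tokens, and then verify injectivity of $S$ by a case analysis parallel to part~(3). The main obstacle I expect is the matching lower bound $\dim(G)\ge\lceil 2(m+n-2)/3\rceil$: the only-one-empty-column bound alone gives merely $k\ge n-1$, too weak in this regime. The sharper bound should come from ruling out more subtle configurations through the $S$-function analysis, in particular: any pair of rows of size~$1$ whose tokens lie in columns of size~$1$ produces a $2\times 2$ swap collision on the two ``off-diagonal'' non-tokens, so the counts $p_r:=\#\{i:r_i\ge 2\}$, $q_r:=\#\{i:r_i=1\}$ and their column analogues $p_c,q_c$ (together with the at-most-one-empty-row and at-most-one-empty-column constraints) satisfy coupled inequalities that, combined with $\sum r_i=\sum c_j=k$, should yield $3k\ge 2(m+n-2)$. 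Pinning down the extremal configurations driving this count, and confirming they are precisely matched by the construction, is the delicate point on which I expect to spend the most effort.
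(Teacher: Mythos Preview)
Your proposal is correct and tracks the paper's proof closely. The reformulation via $S(i,j)=R_i\cup C_j$ is exactly the right lens; the paper uses it implicitly throughout. For part~(3) your construction and lower bound match the paper's (Proposition~\ref{case2} and Corollary~\ref{cor}); for $m=2$ the paper simply cites the known crown-graph result rather than redoing the diameter-$3$ analysis you sketch.

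For the part~(2) lower bound, the paper organizes the count differently: it fixes a maximum ``matching'' $M\subseteq W$ (tokens with pairwise distinct rows and columns), observes that the remaining rows and columns force additional tokens $A,B\subseteq W$, and then applies the $2\times 2$ swap lemma to the elements of $M$ left untouched by $A\cup B$ to force yet more tokens. Your scheme via the degree counts $p_r,q_r,p_c,q_c$ reaches the same inequality and is arguably cleaner, but watch one subtlety: the constraint ``at most one isolated token'' by itself only gives $q_r+q_c\le k+1$, which yields $3k\ge 2(m+n-2)-1$ and is off by one when $m+n\equiv 1\pmod 3$. The fix, which the paper also needs (see the remark opening Case~1 of Lemma~\ref{lower}), is that when there is both an empty row and an empty column, even a \emph{single} isolated token $(a,b)$ produces a collision between $(u_m,b)$ and $(a,v_n)$; so in the $s=t=1$ case the constraint tightens to zero isolated tokens, giving $q_r+q_c\le k$ and hence $3k\ge 2(m+n-2)$ exactly. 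Once you build that into your case split on $(s,t)$, your argument goes through.
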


In section 2, we prove some bounds for the metric dimension of tensor product of cliques and section 3 is dedicated to the proof of Theorem \ref{main}.

\section{Some Bounds }
In this section, we present some bounds for the metric dimension of $K_{m_1} \otimes \cdots \otimes K_{m_t}$. First, we present two following lemmas. The proof of the first lemma is easy and we omit it.

\begin{lem}\label{lem1}
Let $G=K_{m_1} \otimes \cdots \otimes K_{m_t}$ and $m_i \leq m_{i+1}$. Then the followings hold:
\begin{enumerate}
\item If $m_1=m_2=2$, then $G$ is disconnected.
\item If $m_1=2$ and $m_2 \geq 3$, then $diam(G)=3$.
\item If $m_1\geq 3$, then $diam(G)=2$.
\end{enumerate}
\end{lem}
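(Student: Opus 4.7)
My plan is as follows. For part (1), I would exhibit a two-coloring of $V(G)$ that is preserved by every edge. Labelling $V(K_{m_1}) = V(K_{m_2}) = \{0,1\}$ (and the remaining factors arbitrarily), set $\epsilon(v) = v_1 + v_2 \pmod 2$ on each vertex $v = (v_1, \ldots, v_t)$. Any edge of $G$ forces both $v_1$ and $v_2$ to change simultaneously, since the only edge of $K_2$ joins $0$ and $1$; hence $\epsilon$ is invariant along edges. Both parity classes are non-empty, so $G$ decomposes into at least two components.

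For parts (2) and (3), the starting observation is: two vertices $u = (u_1, \ldots, u_t)$ and $x = (x_1, \ldots, x_t)$ are adjacent iff $u_i \ne x_i$ for every $i$, and they admit a common neighbor iff, for every coordinate $i$, a value in $V(K_{m_i})$ avoiding $\{u_i, x_i\}$ can be chosen. For part (3), all $m_i \ge 3$, so the observation always succeeds and $d(u,x) \le 2$ for any distinct pair; the reverse inequality follows from any two vertices that agree in a single coordinate, giving $diam(G) = 2$. For part (2), the bound $diam(G) \ge 3$ is witnessed by $u$ and $x$ that differ only in coordinate $1$: they agree in coordinate $2$ so are non-adjacent, and coordinate $1$ cannot avoid both values of $V(K_2)$ so they share no common neighbor. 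For the upper bound $diam(G) \le 3$, I would case-split on a general pair $u,x$: if they differ everywhere, $d(u,x) = 1$; if they agree in coordinate $1$, the observation supplies a common neighbor (using $m_i \ge 3$ for $i \ge 2$) so $d(u,x) = 2$; otherwise they differ in coordinate $1$ and agree in some coordinate $i \ge 2$, in which case I would pick a neighbor $a$ of $u$ with $a_1 = x_1$ and $a_i \notin \{u_i, x_i\}$ for $i \ge 2$, then a common neighbor $b$ of $a$ and $x$ with $b_1 = u_1$ and $b_i \notin \{a_i, x_i\}$ for $i \ge 2$; both choices are possible because $m_i \ge 3$ for $i \ge 2$, yielding a walk of length $3$ from $u$ to $x$.

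The main obstacle is the case analysis in part (2): one must verify that the intermediate vertices $a$ and $b$ of the length-$3$ walk can always be chosen simultaneously when $u$ and $x$ agree in several coordinates beyond the first, and this is exactly where the hypothesis $m_2 \ge 3$ (which forces $m_i \ge 3$ for every $i \ge 2$) is essential.
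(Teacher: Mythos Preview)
Your proposal is correct and complete. The paper itself omits the proof of this lemma entirely (``The proof of the first lemma is easy and we omit it''), so there is nothing to compare against; your argument---the parity invariant for part~(1), the common-neighbor criterion for part~(3), and the explicit length-$3$ walk construction for part~(2)---is exactly the kind of direct verification the authors had in mind, and every step checks out.
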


\begin{lem} \label{lem}
Let $G=K_{m _1} \otimes \cdots \otimes K_{m_t}$ with $m_i \geq 3$ and $W$ be a resolving set of $G$. Then $|V(K_{m_i}) \setminus W(i)| \leq 1$.
\end{lem}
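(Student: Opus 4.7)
The plan is to argue by contradiction using the fact that the diameter is 2. By Lemma \ref{lem1}(3), since all $m_i \geq 3$, we have $\mathrm{diam}(G)=2$, so every coordinate of a metric representation is in $\{0,1,2\}$. Recall also that in a tensor product two vertices $u=(u_1,\ldots,u_t)$ and $v=(v_1,\ldots,v_t)$ are adjacent (distance $1$) precisely when $u_j \neq v_j$ for all $j$, and distance $2$ otherwise (when $u\neq v$).

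Assume for contradiction that for some index $i$ there exist two distinct elements $a,b \in V(K_{m_i}) \setminus W(i)$. I would then exhibit two distinct vertices of $G$ with identical metric representations. Namely, fix any choice of $c_j \in V(K_{m_j})$ for $j \neq i$, and let
\[
x=(c_1,\ldots,c_{i-1},a,c_{i+1},\ldots,c_t), \qquad y=(c_1,\ldots,c_{i-1},b,c_{i+1},\ldots,c_t).
\]
Clearly $x \neq y$. The key claim to verify is that $d(x,w)=d(y,w)$ for every $w=(w_1,\ldots,w_t) \in W$.

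To see this, note that because $a,b \notin W(i)$ we have $w_i \neq a$ and $w_i \neq b$, so in particular $w \neq x$ and $w \neq y$; thus both distances are in $\{1,2\}$. Moreover, $d(x,w)=1$ iff $x_j \neq w_j$ for all $j$; since $x_i=a\neq w_i$ is automatic, this reduces to the condition $c_j \neq w_j$ for all $j \neq i$. By exactly the same reasoning, $d(y,w)=1$ reduces to the same condition. Hence $d(x,w)=1 \iff d(y,w)=1$, and otherwise both equal $2$. Therefore $r(x\mid W)=r(y\mid W)$, contradicting the assumption that $W$ is a resolving set.

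The argument is essentially a single diameter-$2$ parity check and there is no serious obstacle; the only point requiring a bit of care is the clean characterization of adjacency in the tensor product (all coordinates differ), which makes it transparent that changing only the $i$-th coordinate between two values both missing from $W(i)$ cannot alter any distance to a vertex in $W$.
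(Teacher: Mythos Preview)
Your proof is correct and follows essentially the same approach as the paper's: assume two elements of $V(K_{m_i})$ are missing from $W(i)$, build two vertices differing only in that coordinate, and observe they have identical representations with respect to $W$. You simply spell out the distance verification that the paper leaves as ``not hard to see.''
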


\begin{proof}
By Lemma \ref{lem1}, we conclude that $diam(G)=2$. Note that if $u=(u_1, \ldots, u_t)$ and $v=(v_1, \ldots, v_t)$, then $d(u,v)=2$ if and only if $u_i=v_i$, for some $1 \leq i \leq t$.
\\
On the contrary, suppose that there exist $1 \leq i \leq t$ and two distinct vertices $x, y \in K_{m_i}$, such that $x, y \notin W(i)$. With no loss of generality, we may assume that $i=1$.
Let $u_i \in V(K_{m_i})$, for $i=2, \ldots, t $. It is not hard to see that $r((x, u_2, \ldots, u_t) \, | \, W)=r((y, u_2, \ldots, u_t) \, | \, W)$, which is a contradiction. Hence $|V(K_{m_i}) \setminus W(i)| \leq 1$.
\end{proof}

Now, the following corollary is clear.

\begin{cor}\label{cor}
Let $G=K_{m_1} \otimes \cdots \otimes K_{m_t}$, where $m_i \geq 3$. Then $dim(G) \geq max\, \{m_i-1: i=1, \ldots, t \}$.
\end{cor}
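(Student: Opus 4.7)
The corollary falls out almost immediately from Lemma \ref{lem}, so the plan is short. The key observation is that the projection $\pi_i \colon V(G) \to V(K_{m_i})$ sending $(x_1, \ldots, x_t)$ to $x_i$ restricts to a surjection $W \twoheadrightarrow W(i)$, whence $|W| \geq |W(i)|$ for every index $i$.

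My plan is to fix an arbitrary resolving set $W$ of $G$ and apply Lemma \ref{lem} coordinate by coordinate. For each $i \in \{1, \ldots, t\}$, the lemma gives $|V(K_{m_i}) \setminus W(i)| \leq 1$, hence $|W(i)| \geq m_i - 1$. Combining with the projection inequality $|W| \geq |W(i)|$ yields $|W| \geq m_i - 1$ for every $i$. Taking the maximum over $i$ gives $|W| \geq \max\{m_i - 1 : i = 1, \ldots, t\}$. Since $W$ was an arbitrary resolving set, this bound also holds for a resolving set of minimum cardinality, which is exactly $\dim(G) \geq \max\{m_i - 1 : i = 1, \ldots, t\}$.

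There is essentially no obstacle here; the hypothesis $m_i \geq 3$ is needed only because Lemma \ref{lem} requires it (in order to ensure $\mathrm{diam}(G) = 2$ via Lemma \ref{lem1}, which drives the vertex-collision argument in the lemma's proof). The only point worth flagging in the write-up is the elementary fact $|W(i)| \leq |W|$, which holds because $W(i)$ is the image of $W$ under the $i$-th coordinate projection — this is what converts the per-coordinate bound of Lemma \ref{lem} into a bound on $|W|$ itself.
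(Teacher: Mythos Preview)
Your proof is correct and is exactly the argument the paper intends: it simply states that the corollary is ``clear'' from Lemma~\ref{lem}, and your write-up spells out precisely the projection step $|W| \geq |W(i)| \geq m_i - 1$ that makes this transparent.
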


In the following propositions, we present upper and lower bounds for $dim(K_{m_1} \otimes \cdots \otimes K_{m_t})$.

\begin{pro}
Let $G=K_{m_1} \otimes \cdots \otimes K_{m_t}$, where $m_i \geq 3$. Then
\begin{center}
$dim(G) \geq max \, \{dim(K_{i_{1}} \otimes \cdots \otimes K_{i_{t-1}})\}, $
\end{center}
where $\{i_1, \ldots, i_{t-1}\} \subseteq \{m_1, \ldots, m_t\}$.
\end{pro}

\begin{proof}
Let $W$ be a resolving set of $G$. Define $W(\overline{1})$ as follows.
\vspace{0.5em}
\begin{center}
$W(\overline{1})=\{ (z_2, \ldots, z_t): \;\;\; (z_1, z_2, \ldots, z_t) \in W,\;\; for \; some \; z_1 \in V(K_{m_1}) \}$.
\end{center}
\vspace{0.5em}
If $|W(\overline{1})|<dim(K_{m_{2}} \otimes \cdots \otimes K_{m_{t}})$, then there exist $x,y \in V(K_{m_2} \otimes \cdots \otimes K_{m_t})$ such that $r(x \, | \, W(\overline{1}))=r(y \, | \, W(\overline{1}))$. Suppose that $x=(x_2, \ldots, x_t)$ and $y=(y_2, \ldots, y_t)$. It is clear that $r((v,x_2, \ldots, x_t) \, | \, W)=r((v,y_2, \ldots, y_t) \, | \, W)$, for every $v \in V(K_{m_1})$. This is a contradiction and implies that $|W(\overline{1})| \geq dim(K_{m_{2}} \otimes \cdots \otimes K_{m_{t}})$. One can prove a similar relation for $|W(\overline{i})|$, for $i=1, 2, \ldots, t$. Clearly, $|W(\overline{i})| \leq |W|$ and this completes the proof. 
\end{proof}

Note that Proposition \ref{case2} will show that the previous bound is sharp. Now, we present an upper bound for the metric dimension of the tensor product of cliques.

\begin{pro}
Let $G=K_{m_1} \otimes \cdots \otimes K_{m_t}$, where $m_i \geq 3$. Then
\begin{center}
$dim(G) \leq 3\;min \, \{dim(K_{i_{1}} \otimes \cdots \otimes K_{i_{t-1}})+ dim(K_{j_{1}} \otimes \cdots \otimes K_{j_{t-1}}) \},$
\end{center}
where $\{i_1, \ldots, i_{t-1}\}$ and $\{j_1, \ldots, j_{t-1}\}$ are two distinct subsets of $\{m_1, \ldots, m_t\}$.
\end{pro}

\begin{proof}
Let $a_1, a_2, a_3 \in V(K_{m_1})$ and $b_1, b_2, b_3 \in V(K_{m_t})$. Also, suppose that $W_1$ and $W_t$ are resolving sets for $K_{m_2} \otimes \cdots \otimes K_{m_t}$ and $K_{m_1} \otimes \cdots \otimes K_{m_{t-1}}$, respectively. Define $W'_1$ and $W'_t$ as follows.
\vspace{0.5em}
\begin{center}
$W'_1=\{ (a_i, u_2, \ldots, u_t): \; i=1, 2, 3 \;\;\; (u_2, \ldots, u_t) \in W_1 \}$
\\
$W'_t=\{ (u_1, \ldots, u_{t-1}, b_i): \; i=1, 2, 3 \;\;\; (u_1, \ldots, u_{t-1}) \in W_t \}$.
\end{center}
\vspace{0.5em}
Now, we show that $W=W'_1 \cup W'_t$ is a resolving set for $G$. Let $x=(x_1, \ldots, x_t)$ and $y=(v_1, \ldots, y_t)$ be two distinct vertices of $G$. Thus $x_i \neq y_i$, for some $1 \leq i \leq t$. With no loss of generality, we assume that $i=1$ and $b_3 \neq x_t, y_t$. Let $X=\{ (u_1, \ldots, u_{t-1}, b_3): (u_1, \ldots, u_{t-1}) \in W_t \}$, then $r(x\,|\,X) \neq r(y\,|\,X)$ and therefore $r(x\,|\,W) \neq r(y\,|\,W)$. Note that if $i \neq 1$, then the similar method, by using $W'_{1}$, implies that $r(x\,|\,W) \neq r(y\,|\,W)$. This implies that $W$ is a resolving set of $G$ and completes the proof.
\end{proof}

\section{Proof of Theorem \ref{main}}

This section is dedicated to the proof of Theorem \ref{main}. We divide the proof into three parts, i.e. Propositions \ref{case1}, \ref{case2} and \ref{case3}. In \cite{bipartite} the following theorem was proved.

\begin{thm}\label{bi}
Let $G=K_{n,n} \setminus I$, in which $I$ is a perfect matching and $n \geq 3$. Then $dim(G)=n-1$.
\end{thm}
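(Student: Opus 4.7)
The plan is to establish matching bounds of $n-1$. Write the bipartition as $V(G)=\{a_1,\dots,a_n\}\cup\{b_1,\dots,b_n\}$ with removed matching $I=\{a_ib_i:1\le i\le n\}$, so that $a_i\sim b_j$ iff $i\ne j$. Using $n\ge 3$, a direct inspection gives the distances
\[
d(a_i,a_j)=d(b_i,b_j)=2\ \ (i\ne j),\quad d(a_i,b_j)=1\ \ (i\ne j),\quad d(a_i,b_i)=3,
\]
the last value because $a_i$ and $b_i$ share no common neighbour while the length-$3$ walk $a_i\to b_k\to a_\ell\to b_i$ exists whenever $i,k,\ell$ are pairwise distinct.

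For the lower bound, let $W$ be any resolving set and set $A_0=\{i:a_i\notin W\}$, $B_0=\{j:b_j\notin W\}$. The key combinatorial claim is $|A_0\cap B_0|\le 1$. Suppose for contradiction that two distinct indices $i,j$ lie in $A_0\cap B_0$, so that $a_i,a_j,b_i,b_j$ are all absent from $W$. Then every landmark $a_k\in W$ satisfies $k\notin\{i,j\}$ and therefore $d(a_i,a_k)=d(a_j,a_k)=2$; similarly, every $b_k\in W$ satisfies $k\notin\{i,j\}$ and $d(a_i,b_k)=d(a_j,b_k)=1$. Hence $r(a_i\,|\,W)=r(a_j\,|\,W)$, a contradiction. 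Combining this with the trivial bound $|A_0\cup B_0|\le n$ gives $|A_0|+|B_0|=|A_0\cup B_0|+|A_0\cap B_0|\le n+1$, so $|W|=2n-|A_0|-|B_0|\ge n-1$.

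For the upper bound I would verify directly that $W=\{a_1,\dots,a_{n-1}\}$ resolves $G$ by tabulating the codes: $r(a_n\,|\,W)=(2,\dots,2)$; for each $1\le k\le n-1$, $r(b_k\,|\,W)$ is the all-ones vector with a $3$ in position $k$; and $r(b_n\,|\,W)=(1,\dots,1)$. These $2n-1$ vectors are pairwise distinct, so $W$ is a resolving set and $\dim(G)\le n-1$.

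The only real obstacle is spotting the twin-like obstruction that forces $|A_0\cap B_0|\le 1$; once that is in hand, both the counting argument and the explicit resolving set are short. The hypothesis $n\ge 3$ plays a double role: it guarantees that $\mathrm{diam}(G)=3$ so the distance table above is valid, and it ensures that in the lower-bound argument there are always landmarks with indices outside $\{i,j\}$ available for the distance comparisons.
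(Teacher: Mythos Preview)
Your argument is correct and self-contained. Note, however, that the paper does not itself prove this theorem: it is quoted from \cite{bipartite} and then invoked as a black box in Proposition~\ref{case1} to settle the case $m=2$. So there is no in-paper proof to compare your approach against; what you have written is a clean direct proof of the cited result.

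Two small clean-ups. First, in the upper-bound verification you write ``these $2n-1$ vectors,'' but you have actually tabulated only the $n+1$ codes of the vertices outside $W$ (namely $a_n$ and $b_1,\dots,b_n$). To finish, simply observe that each $a_i\in W$ has a $0$ in coordinate $i$ of its representation, which distinguishes the landmarks from one another and from every listed vector (none of which contains a $0$); this accounts for all $2n$ vertices. Second, your closing remark slightly misattributes the role of $n\ge 3$: in the lower-bound step the fact that every landmark has index outside $\{i,j\}$ follows automatically from $a_i,a_j,b_i,b_j\notin W$, not from $n\ge 3$. The hypothesis $n\ge 3$ is genuinely needed only to make $G$ connected and to validate the distance table (in particular $d(a_i,a_j)=2$ and $d(a_i,b_i)=3$).
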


Now, we prove the following proposition.

\begin{pro}\label{case1}
Let $G=K_2 \otimes K_n$. Then
\begin{enumerate}
\item If $n=2$, then $G$ is disconnected.
\item If $n \geq 3$, then $dim(G)= n-1$.
\end{enumerate}
\end{pro}

\begin{proof}
It is clear that $K_2 \otimes K_n$ is isomorphic to $K_{n,n} \setminus I$, where $I$ is a perfect matching. Now, Theorem \ref{bi} completes the proof.
\end{proof}

So, we may assume that $m,n \geq 3$. First, we prove the following theorem.

\begin{pro}\label{case2}
Let $G=K_m \otimes K_n$ such that $n\geq 2m-1$ and $m\geq 3$. Then $dim(G)= n-1$. 
\end{pro}

\begin{proof}
Note that Corollary \ref{cor} implies $dim(G) \geq n-1$. We construct a resolving set of size $n-1$ and this completes the proof. Define $W\subseteq V(G)$ as follows:
\vspace{0.5em}
\begin{center}
$W=\{(u_1, v_1), \ldots, (u_{m-1}, v_{m-1}), (u_1, v_m), \ldots, (u_{m-1}, v_{2m-2})\} \cup \{(u_1,v_j): 2m-1 \leq j \leq n-1\}$
\end{center}
\vspace{0.5em} 
Clearly, $|W|=n-1$. Now, we show that $W$ is a resolving set of $G$. Let $(u,v)$ and $(x,y)$ be two distinct vertices of $G$. Without loss of generality, we may assume that $(u, v)$ and $(x, y)$ are not in $W$. Now, we consider two cases.
\vspace{1.5em}
\\
\textbf{Case 1.} $u=x$ or $v=y$.
\\
Suppose that $u=x$. Thus $v\neq y$ and hence $v$ or $y$ is not equal to $v_n$. Suppose that $y\neq v_n$. Therefore, there exists an element $u_k\in V(K_m)$, such that $(u_k, y)\in W$. Since $(u, y)\notin W$, we conclude that $u_k \neq u$. 
Then the coordinates corresponding to $(u_k, y)$ in $r((u,v)\,|\,W)$ and $r((u,y)\,|\,W)$ have different values. 
\vspace{1.5em}
\\
\textbf{Case 2.} $u \neq x$ and $v \neq y$. We consider two subcases:
\vspace{0.8em}
\\
\textbf{Subcase 2.1.} $u_m \in \{u, x\}$ or $v_n \in \{v, y\}$.
\\
Let $u= u_m$. If $v=v_n$, then $r((u,v)|W)=(1, \ldots, 1)$ and
this implies that $r((u,v)\,|\,W) \neq r((x,y)\,|\,W)$. So, we may assume that $v \neq v_n$.
\\
Since $u \neq x$, we have $x \in W(1)$ and by the structure of $W$, there are at least two elements in $W$ such as $(x,v_k)$ and $(x,v_l)$. Now, at most one of the coordinates corresponding to $(x,v_k)$ and $(x,v_l)$ in $r((u,v)\,|\,W)$ is equal to two while both of them in $r((x,y)\,|\,W)$ are equal to two. This implies that $r((u,v)\,|\,W) \neq r((x,y)\,|\,W)$. 
\vspace{0.8em}
\\
\textbf{Subcase 2.2.} $u_m \notin \{u, x\}$ and $v_n \notin \{v, y\}$.
\\
By the structure of $W$, there are $(x,v_k)$ and $(x,v_l)$ in $W$ such that $v_k, v_l \neq y$. Now, coordinates corresponding to $(x,v_k)$ and $(x,v_l)$ in $r((x,y)\,|\,W)$ are equal to two. Since $u\neq x$ we can conclude that at most one of these components is equal to two in $r((u,v)\,|\,W)$. This implies that $r((u,v)\,|\,W) \neq r((x,y)\,|\,W)$ and completes the proof.
\end{proof}

We have the following lemma.

\begin{lem}\label{2}
Let $G =K_m \otimes K_n$, where $m,n \geq 3$, $W \subseteq V(G)$ and $(u,v), (x,y) \in W$. Also suppose that for every $(u_1,v_1) \in W\setminus \{(u,v), (x,y)\}$ we have $u_1 \notin \{u,x\}$ and $v_1 \notin \{v,y\}$. Then $r((u,y)\,|\,W)=r((x,v)\,|\,W)$.
\end{lem}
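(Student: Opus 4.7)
The plan is to compute the metric representations coordinate-by-coordinate and show that the entries of $r((u,y)\,|\,W)$ and $r((x,v)\,|\,W)$ agree at every element of $W$. Since Lemma \ref{lem1} gives $\mathrm{diam}(G)=2$, and since in $K_m\otimes K_n$ two vertices $(a,b),(c,d)$ are adjacent iff $a\neq c$ and $b\neq d$, the distance function takes only three values: $d((a,b),(c,d))=0$ if they coincide, $d=1$ if both coordinates differ, and $d=2$ if exactly one coordinate agrees. I would begin the proof by stating this trichotomy explicitly, as it is the only tool needed.

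Next, I would split $W$ into three blocks and verify the equality on each one. For the coordinate corresponding to $(u,v)\in W$, observe that $(u,y)$ shares its first coordinate with $(u,v)$ and $(x,v)$ shares its second coordinate with $(u,v)$; assuming $y\neq v$ and $u\neq x$ (otherwise the two target vertices coincide and the claim is trivial), both distances equal $2$. Symmetrically, at the coordinate corresponding to $(x,y)\in W$, both $d((u,y),(x,y))$ and $d((x,v),(x,y))$ equal $2$ because again exactly one coordinate matches.

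For any remaining witness $(u_1,v_1)\in W\setminus\{(u,v),(x,y)\}$, the hypothesis $u_1\notin\{u,x\}$ and $v_1\notin\{v,y\}$ guarantees that $(u,y)$ differs from $(u_1,v_1)$ in both coordinates, and likewise for $(x,v)$. By the trichotomy, both distances equal $1$. Combining the three cases yields $r((u,y)\,|\,W)=r((x,v)\,|\,W)$.

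There is no real obstacle; the argument is a direct verification once the distance formula in $K_m\otimes K_n$ is in hand. The only thing to be careful about is the small degenerate possibility $(u,y)=(x,v)$ or overlap with $(u,v),(x,y)$, which is handled by noting that in these corner cases either the statement is vacuous or both sides coincide trivially.
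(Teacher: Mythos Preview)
Your argument is correct and is exactly the approach the paper takes: the paper's proof is the single sentence ``It is easy to see that all coordinates of $r((u,y)\,|\,W)$ and $r((x,v)\,|\,W)$ are equal to one, except the coordinates corresponding to $(u,v)$ and $(x,y)$,'' which is precisely your three-block verification in compressed form. Your version is simply more explicit about the distance trichotomy and the degenerate cases.
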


\begin{proof}
It is easy to see that all coordinates of $r((u,y)\,|\,W)$ and $r((x,v)\,|\,W)$ are equal to one, except the coordinates corresponding to $(u,v)$ and $(x,y)$. 
\end{proof}

Now, we determine the metric dimension of $G=K_m \otimes K_n$, where $m \leq n \leq 2m-2$. First, we present the lower bound for $dim(G)$ in the following lemma and in the next proposition we show that this bound is the exact value of $dim(G)$.

\begin{lem}\label{lower}
Let $G=K_m \otimes K_n$, where $m \leq n \leq 2m-2$ and $m \geq 3$. Then $dim(G) \geq \lceil \frac{2}{3} (m+n-2) \rceil$.
\end{lem}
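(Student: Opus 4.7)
My plan is to encode the resolving set $W$ as the edge set of a bipartite graph and then count edges against components. Form the bipartite graph $H$ with classes $V(K_m)$ and $V(K_n)$ whose edge set is $W$. Since $\operatorname{diam}(G)=2$ by Lemma \ref{lem1}, and $d((u,v),(x,y))=2$ exactly when precisely one of $u=x$ and $v=y$ holds, the representation $r((u,v)\,|\,W)$ will record exactly which elements of $W$ are incident (as edges of $H$) to $u$ on the $K_m$-side or to $v$ on the $K_n$-side, plus a distinguished $0$ at $(u,v)$ if it happens to lie in $W$.

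First, Lemma \ref{lem} gives that $H$ has at most one isolated vertex in each part; I would set $a=m-|W(1)|\in\{0,1\}$ and $b=n-|W(2)|\in\{0,1\}$. Next I would establish two structural restrictions on $H$. Claim (C1): $H$ has at most one connected component isomorphic to $K_2$; indeed, two disjoint $K_2$-components with edges $(u,v),(x,y)\in W$ satisfy the hypothesis of Lemma \ref{2} (because each of $u,v,x,y$ has degree $1$ in $H$), so $r((u,y)\,|\,W)=r((x,v)\,|\,W)$, and neither $(u,y)$ nor $(x,v)$ lies in $W$, contradicting the resolving property. Claim (C2): if $a=b=1$, then $H$ has no $K_2$-component; namely, if $x,v$ are the isolated vertices on the two sides and $(u,y)$ is a $K_2$-edge, a direct check shows that the distinct vertices $(u,v),(x,y)\notin W$ both have distance $2$ to $(u,y)$ and distance $1$ to every other element of $W$, so their representations coincide.

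Finally I would count. Restrict $H$ to its non-isolated vertices, obtaining $H'$ with $|V(H')|=(m-a)+(n-b)$ and $|E(H')|=|W|$, and let $k_2$ and $k_{\geq 3}$ denote the numbers of components of $H'$ of sizes $2$ and $\geq 3$. Since each component is connected, $|W|\geq |V(H')|-(k_2+k_{\geq 3})$, and the size bound $3k_{\geq 3}\leq |V(H')|-2k_2$ gives $k_2+k_{\geq 3}\leq (|V(H')|+k_2)/3$, whence $|W|\geq (2|V(H')|-k_2)/3$. If $a=b=1$, then (C2) forces $k_2=0$ and $|V(H')|=m+n-2$, so $|W|\geq 2(m+n-2)/3$; if $a+b\leq 1$, then (C1) gives $k_2\leq 1$ and $|V(H')|\geq m+n-1$, so $|W|\geq (2(m+n)-3)/3 > 2(m+n-2)/3$. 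Since $|W|$ is an integer, $|W|\geq \lceil 2(m+n-2)/3\rceil$ in every case.

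The main obstacle is pinning down the right pair of forbidden substructures: Lemma \ref{2} delivers (C1) immediately, but (C2) requires a short separate verification handling the mixed situation of isolated vertices on both sides together with a lone $K_2$-component. Note that the constraint $n\leq 2m-2$ plays no role in this lower bound; it will enter only in the matching construction of the next proposition certifying that the bound is tight.
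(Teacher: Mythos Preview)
Your argument is correct and takes a genuinely different route from the paper's. The paper fixes a maximum subset $M\subseteq W$ with pairwise distinct coordinates (in your language, a maximum matching in $H$), sets $|M|=k$, and then uses Lemma~\ref{lem} to produce two further blocks $A,B\subseteq W$ covering the remaining $u_i$'s and $v_j$'s; Lemma~\ref{2} is then invoked to show that any two elements of $M$ left ``untouched'' by $A\cup B$ force an additional element of $W$, and the bound is obtained by minimising the resulting expression $f(k)=k+(m-k-s)+(n-k-t)+\max\{0,\lceil(3k-m-n+s+t)/2\rceil\}$ over $k$, with a separate case split on $(s,t)$. Your approach is more structural: instead of optimising over the size of a matching, you bound the number of connected components of $H'$ via the restriction on $K_{2}$-components (your (C1) and (C2)) and apply the spanning-forest inequality $|E|\ge |V|-c$ to get the clean estimate $|W|\ge (2|V(H')|-k_{2})/3$ directly. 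This sidesteps the minimisation over $k$ entirely; the paper's argument, on the other hand, makes the extremal matching explicit in a way that anticipates the construction in Proposition~\ref{case3}. Your remark that the hypothesis $n\le 2m-2$ plays no role in the lower bound is also correct.
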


\begin{proof}
Let $W$ be a minimal resolving set and $M \subseteq W$ be of maximum size, such that if $(u,v), (x,y) \in M$, then $u\neq x$ and $v\neq y$. Suppose that $|M|=k$. Without loss of generality, we may assume that $M=\{(u_{1}, v_{1}), \ldots, (u_{k}, v_{k}) \}$. We find a lower bound for $|W|$. By Lemma \ref{lem}, we have $|V(K_m) \setminus W(1)|, |V(K_n) \setminus W(2)| \leq 1$. Assume that $|V(K_m) \setminus W(1)|=s$ and $|V(K_n) \setminus W(2)|=t$, where $s,t \in \{0, 1\}$.
\\
Let $k<i<m-s$. Since $u_i\in W(1)$, there exists $1\leq j_i \leq n-t$ such that $(u_i, v_{j_i})\in W$. Maximality of $M$ implies $j_i \leq k $.
So, we can find $j_1, \ldots, j_{m-k-s} \leq k$ such that $A=\{(u_{k+1},v_{j_1}), \ldots, (u_{m-s},v_{j_{m-k-s}})\} \subseteq W$. Similarly, there exists $i_1, \ldots, i_{n-k-t} \leq k$ such that $B=\{(u_{i_1},v_{k+1}), \ldots, (u_{i_{n-k-t}},v_{n-t})\} \subseteq W$.
\\
If $(u,v),(x,y) \in M$ such that $u,x \notin A(1) \cup B(1)$ and $v,y \notin A(2) \cup B(2)$, then Lemma \ref{2} implies that $r((u,y)\,|\,M \cup A \cup B)=r((x,v)\,|\,M \cup A \cup B)$. Since $W$ is a resolving set, there exists $(a,b)\in W$ such that $d((u,y),(a,b)) \neq d((x,v),(a,b))$. Now, we consider two cases:
\vspace{0.8em}
\\
\textbf{Case 1.} $s=t=1$.
\\
Note that if $u_m \notin W(1)$ and $v_n \notin W(2)$, then $r((u,v_n)\,|\,M \cup A \cup B)=r((u_m, v)\,|\,M \cup A \cup B)$. This implies that:
\begin{center}
$|W| \geq k+(m-k-1)+(n-k-1)+max\,\{0,\lceil \frac{3k-m-n+2}{2} \rceil \}=f(k)$
\end{center}
Note that if $k \leq \lfloor \frac{m+n-2}{3} \rfloor$, then $f(k)= m+n-k-2$. This yields that $f(k)$ has its minimum value at $k = \lfloor \frac{m+n-2}{3} \rfloor$.
\\
If $k \geq \lceil \frac{m+n-2}{3} \rceil$, then it is not hard to see that $f(k+1) \geq f(k)$. This implies that $f(k)$ has its minimum value at $k = \lceil \frac{m+n-2}{3} \rceil$.
\\
Note that $f(k)=f(k')$, where $k = \lfloor \frac{m+n-2}{3} \rfloor$ and $k' = \lceil \frac{m+n-2}{3} \rceil$. So, we have:
\begin{center}
$|W| \geq k+(m-k-1)+(n-k-1)$,
\end{center}
where $k= \lfloor \frac{m+n-2}{3} \rfloor$. Hence, if $W$ is an arbitary minimal resolving set of $G$ and $s=t=1$, then $|W| \geq \lceil \frac{2}{3}(m+n-2) \rceil$.
\vspace{0.8em}
\\
\textbf{Case 2.} $s=0$ or $t=0$.
\\
Assume that $s=0$. We have:
\begin{center}
$ |W| \geq k+(m-k)+(n-k-t)+max\,\{0,\lfloor \frac{3k-m-n+t}{2} \rfloor \}=f(k) $
\end{center}
Then the similar method, to the previous case, shows that $f(k)$ has its minimum value in $k= \lfloor \frac{m+n-t}{3} \rfloor$ and therefore $|W| \geq \lceil \frac{2}{3}(m+n-t) \rceil$.
\vspace{0.8em}
\\
Hence if $W$ is an arbitrary minimal resolving set of $G$, then $|W| \geq \lceil \frac{2}{3}(m+n-2) \rceil$.
\end{proof}

Now, we prove the following proposition which completes the proof of Theorem \ref{main}.

\begin{pro}\label{case3}
Let $G=K_m \otimes K_n$ such that $m \leq n \leq 2m-2$ and $m \geq 3$. Then $dim(G) = \lceil \frac{2}{3}(m+n-2) \rceil$.
\end{pro}

\begin{proof}
By Lemma \ref{lower}, it sufficies to find a resolvable set of size $\lceil \frac{2}{3}(m+n-2) \rceil$.  
Let $V$ be defined as follows and $k=\lfloor \frac{m+n-2}{3} \rfloor$.
\begin{center}
$V_1=\{\,(u_1,v_1), \ldots, (u_k, v_k)\}$\\
$V_2= \{(u_{k+i}, v_{i})\,\, | \,\, 1 \leq i \leq m-k-1\,\}$\\
$V_3= \{\,(u_{m-k-1+i}, v_{k+i})\,\, |\,\, 1\leq i \leq n-k-1\, \}$\\
$V= V_1 \cup V_2 \cup V_3$, 
\end{center}
where we use mod $k$ arithmetic for indices of $v_i$ and $u_{m-k-1+i}$ in $V_2$ and $V_3$, respectively. We claim that $W$ is a resolving set of $G$. Note that $|V_2 \cup V_3| \geq |V_1|$ and this implies that if $(u,v) \in W$, then either there exists $(u,v') \in W$ or there exists $(u',v) \in W$, where $u' \neq u$ and $v' \neq v$.
\\
Let $(u,v)$ and $(x,y)$ be two distinct vertices of $G$. Without loss of generality, we assume that $(u, v)$ and $(x, y)$ are not in $W$. We consider two cases.
\vspace{1.5em}
\\
\textbf{Case 1.} $u=x$ or $v=y$.\\
Suppose that $u=x$. Therefore, $v \neq y$ and we may assume that $y \neq v_m$. Hence $y \in W(2)$. 
So, there exists $(u_k,y) \in W$, where $u_k \neq u$. Then the coordinate corresponding to $(u_k, y)$ in $r((u,v)\,|\,W)$ is equal to one and in $r((u,y)\,|\,W)$ is two.
\vspace{1.5em}
\\
\textbf{Case 2.} $u \neq x$ and $v \neq y$. We consider two subcases in this case:
\vspace{0.8em}
\\
\textbf{Subcase 2.1.} $u_m \in \{u, x\}$ or $v_n \in \{v, y\}$.
\\
Suppose that $u= u_m$. Since $u \neq x$ we have $x \in W(1)$. If $v=v_n$, then $r((u,v)\,|\,W)=(1, \ldots, 1)$ and this implies that $r((u,v)\,|\,W) \neq r((x,y)\,|\,W)$. Also, if $y \in W(2)$, then there exists $(u_k,y) \in W$ such that $u_k \neq x$. Thus, the coordinate corresponding to $(u_k,y)$ in $r((u,v)\,|\,W)$ is one and in $r((x,y)\,|\,W)$ is two. So, we may assume that $y \notin W(2)$ and $v \in W(2)$. 
\\
By contrary, if $r((u,v)\,|\,W)=r((x,y)\,|\,W)$, then both of them have exactly one coordinate equal to two in their representation and this coordinate is corresponding to $(x,v)$. Thus $(x,v) \in W$ and both of $x$ and $v$ are only appeared in $(x,v)$. By the structure of $W$, this is impossible and this completes the proof in this subcase.
\vspace{0.8em}
\\
\textbf{Subcase 2.2.} $u_m \notin \{u, x\}$ and $v_n \notin \{v, y\}$.
\\
By contrary, suppose that $r((u,v)\,|\,W)=r((x,y)\,|\,W)$. Since $u \in W(1)$ and $v \in W(2)$, there exist $(u,v_k),(u_l,v) \in W$ such that $v_k \neq v$ and $u_l \neq u$.\\
If there exists $(u,v_r) \in W$ such that $v_r \neq v_k$, then without loss of generality, we may assume that $v_r \neq y$. This implies that $r((u,v)\,|\,W) \neq r((x,y)\,|\,W)$, because the coordinates corresponding to $(u,v_r)$ in $r((u,v)\,|\,W)$ and $r((x,y)\,|\,W)$ have different values.\\
On the other hand, since $r((u,v)\,|\,W)=r((x,y)\,|\,W)$, so they have the same value at the coordinate corresponding to $(u,v_k)$. This implies that $v_k=y$ and therefore $(u,y) \in W$. Note that if there exists $(s,y) \in W$ such that $s \neq u$, then one can show that $r((u,v)\,|\,W) \neq r((x,y)\,|\,W)$, a contradiction. So, $(u,y) \in W$ with this property that $u$ and $y$ are only appeared in this element of $W$. By the structure of $W$, this is impossible and this completes the proof.
\end{proof}

\end{document}